\newtheorem{theorem}{Theorem}[section]
\newtheorem{lemma}[theorem]{Lemma}
\newtheorem{corollary}[theorem]{Corollary}
\newtheorem{conjecture}[theorem]{Conjecture}
\theoremstyle{definition}
\newlength{\Oldarrayrulewidth}
\begin{document}

\title{Arithmetic Progressions of Integers that are\\Relatively Prime to their Digital Sums}
\author[1]{Ryan~Blau\thanks{ryan.blau@yotes.collegeofidaho.edu}}
\author[2]{Joshua~Harrington\thanks{joshua.harrington@cedarcrest.edu}}
\author[3]{Sarah~Lohrey\thanks{slohrey@brynmawr.edu}}
\author[4]{Eliel~Sosis\thanks{esosis@umich.edu}}
\author[5]{Tony~W.~H.~Wong\thanks{wong@kutztown.edu}}
\affil[1]{Department of Mathematics \& Physical Sciences, The College of Idaho}
\affil[2]{Department of Mathematics, Cedar Crest College}
\affil[3]{Department of Mathematics, Bryn Mawr College}
\affil[4]{Department of Mathematics, University of Michigan}
\affil[5]{Department of Mathematics, Kutztown University of Pennsylvania}
\date{\today}

\maketitle

\begin{abstract}

For an integer $b\geq 2$, we call a positive integer $b$-anti-Niven if it is relatively prime to the sum of the digits in its base-$b$ representation.  In this article, we investigate the maximum lengths of arithmetic progressions of $b$-anti-Niven numbers.\\
\textit{MSC:} 11A63, 11B25.\\
\textit{Keywords:} Niven, anti-Niven, arithmetic progressions.
\end{abstract}

\section{Introduction}

Throughout this paper, let $b\geq2$ be an integer. For all positive integers $n$, let $s_b(n)$ denote the sum of the digits in the base-$b$ expansion of $n$, i.e., if $n=\sum_{j=0}^m a_j b^j$, where $m$ is a nonnegative integer and $0\leq a_j \leq b-1$ are integers for each $0\leq j\leq m$, then $s_b(n)=\sum_{j=0}^m a_j$.

For positive integers $n$, $d$, and $t$, we call the sequence $\{n+jd:0\leq j\leq t-1\}$ a \emph{$d$-AP of length $t$} and we call the sequence $\{n+jd:j\geq 0\}$ a \emph{$d$-AP of infinite length}. A positive integer $n$ is $b$-\emph{Niven} if $s_b(n)\mid n$.  If every term of a $d$-AP is $b$-Niven, we call it a \emph{$b$-Niven $d$-AP}.  We note that a $b$-Niven $1$-AP is a sequence of consecutive Niven numbers.  

In 1993, Cooper and Kennedy \cite{ck1} showed that the maximum length of a $10$-Niven $1$-AP is $20$.  Grundman \cite{g} generalized this result in 1994 by showing that the maximum length of a $b$-Niven $1$-AP is $2b$.  These maximum lengths were shown to be attainable by Wilson \cite{w}.  More recently, Grundman, Harrington, and Wong \cite{ghw} investigated maximum length $b$-Niven $d$-APs for $d>1$ and Harrington, Litman, and Wong \cite{hlw} showed that every infinite $d$-AP contains infinitely many $b$-Niven numbers.

In 1975, Olivier \cite{olivier} studied sets $S_b=\{n\in\mathbb{Z}:\gcd(n,s_b(n))=1\}$ and showed that the natural density of these sets is $\frac{6}{\pi^2}\prod_{p\mid(b-1)}\frac{p}{p+1}$.  In 1997, Cooper and Kennedy \cite{ck2} published a weaker result that established Olivier's density as an upper bound for the density of $S_{10}$.

In this paper, we define a positive integer $n$ to be \emph{$b$-anti-Niven} if $\gcd(s_b(n),n)=1$.  If every term of a $d$-AP is $b$-anti-Niven, then we call it a \emph{$b$-anti-Niven $d$-AP}.  In Section~\ref{sec:general} we give necessary and sufficient conditions on $d$, $b$, and $n$ for which the $d$-AP $\{n+jd:j\geq 0\}$ contains at least one $b$-anti-Niven number.  We also show that there is no $b$-anti-Niven $d$-AP of infinite length, but for any $b$ and $t$, there are infinitely many $b$-anti-Niven $d$-APs of length $t$.  In Section~\ref{sec:dAP} we investigate the maximum length of $b$-anti-Niven $d$-APs when $b$ and $d$ satisfy various constraints.

\section{$b$-anti-Niven Numbers in $d$-APs}\label{sec:general}
In this section, we are going to give several general results on $b$-anti-Niven numbers in $d$-APs. 

\begin{lemma}\label{lem:b-1}
Let $\delta$ be a positive integer such that $\delta\mid(b-1)$. Then for all positive integers $n$, $\delta\mid n$ if and only if $\delta\mid s_b(n)$.
\end{lemma}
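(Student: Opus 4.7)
The plan is to reduce the divisibility condition on $n$ to a congruence involving $s_b(n)$ by working modulo $\delta$. The hypothesis $\delta \mid (b-1)$ is exactly the statement that $b \equiv 1 \pmod{\delta}$, which is the only fact about $b$ we need.

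First I would observe that since $b \equiv 1 \pmod{\delta}$, induction on $j$ gives $b^j \equiv 1 \pmod{\delta}$ for every nonnegative integer $j$. Then I would write $n$ in its base-$b$ expansion $n = \sum_{j=0}^m a_j b^j$ with $0 \leq a_j \leq b-1$, and reduce termwise modulo $\delta$:
\[
n = \sum_{j=0}^m a_j b^j \equiv \sum_{j=0}^m a_j \cdot 1 = s_b(n) \pmod{\delta}.
\]
The biconditional $\delta \mid n \iff \delta \mid s_b(n)$ is then immediate from $n \equiv s_b(n) \pmod{\delta}$.

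There is no real obstacle here; the lemma is essentially the standard divisibility test generalizing the ``casting out nines'' rule (the case $b=10$, $\delta \in \{3,9\}$). The only thing to be slightly careful about is writing the congruence cleanly with the general base-$b$ expansion already introduced in the paper, so that the proof reads smoothly and matches the notation $s_b(n) = \sum_{j=0}^m a_j$ established in the introduction.
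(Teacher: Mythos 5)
Your proof is correct and is essentially identical to the paper's: both reduce the base-$b$ expansion modulo $\delta$ using $b\equiv1\pmod{\delta}$ to get $n\equiv s_b(n)\pmod{\delta}$, from which the biconditional is immediate.
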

\begin{proof}
Let $n=\sum_{j=0}^m a_j b^j$, where $m$ is a nonnegative integer and $0\leq a_j \leq b-1$ are integers for each $0\leq j\leq m$. The proof follows from the simple observation that $b\equiv1\pmod{\delta}$ and thus $\sum_{j=0}^m a_j b^j\equiv\sum_{j=0}^m a_j\pmod{\delta}$.
\end{proof}

\begin{theorem}
The $d$-AP of infinite length $\{n+jd:j\geq0\}$ contains a $b$-anti-Niven number if and only if $\gcd(n,d,b-1)=1$.
\end{theorem}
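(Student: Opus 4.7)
The plan is to prove both directions of the equivalence. The forward direction follows immediately from Lemma~\ref{lem:b-1}: if $g = \gcd(n,d,b-1) > 1$, then $g$ divides $n+jd$ for every $j \geq 0$ and, since $g \mid b-1$, the lemma yields $g \mid s_b(n+jd)$ as well. Hence $\gcd(n+jd, s_b(n+jd)) \geq g > 1$ and no term of the AP is $b$-anti-Niven.

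For the converse, assume $\gcd(n,d,b-1) = 1$ and set $\delta = \gcd(n,d)$, writing $n = \delta n_1$ and $d = \delta d_1$ with $\gcd(n_1, d_1) = 1$; the hypothesis reduces to $\gcd(\delta, b-1) = 1$. When $\delta = 1$, I will invoke Dirichlet's theorem on primes in APs to produce a prime $P = n + jd$ with $P \geq b$. Since $s_b(P) \leq (b-1)(\lfloor \log_b P \rfloor + 1) < P$ for $P \geq b$, the gcd $\gcd(P, s_b(P))$ is a positive divisor of the prime $P$ strictly less than $P$, hence equals $1$, so $P$ is $b$-anti-Niven.

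When $\delta > 1$, the AP contains no primes, so I will apply Dirichlet instead to the reduced progression $\{n_1 + j d_1 : j \geq 0\}$ to produce primes $P \equiv n_1 \pmod{d_1}$; the corresponding $N = \delta P$ lies in the original AP. For $P$ large, $s_b(\delta P) < P$ gives $\gcd(P, s_b(\delta P)) = 1$, so the task reduces to ensuring $\gcd(\delta, s_b(\delta P)) = 1$. Rather than restrict to prime multipliers, I plan to work more flexibly with $N = \delta M$ where $M = n_1 + d_1' t\, b^K$, letting $d_1'$ denote the part of $d_1$ coprime to $b$ and $t$ range over positive integers, with $K$ large enough that (i) $b^K$ is divisible by the $b$-smooth part of $d_1$, so $M$ has the correct residue mod $d_1$, and (ii) the base-$b$ representations of $\delta n_1$ and $\delta d_1' t\, b^K$ occupy disjoint positions. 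This yields the clean identity $s_b(N) = s_b(\delta n_1) + s_b(\delta d_1' t)$, giving control of the digit sum through $t$.

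The main obstacle lies in choosing $t$ so that $s_b(\delta n_1) + s_b(\delta d_1' t)$ is coprime to $\delta$; equivalently, avoiding for each prime $q \mid \delta$ the forbidden residue $s_b(\delta d_1' t) \equiv -s_b(\delta n_1) \pmod q$. Since $\gcd(q, b-1) = 1$, Lemma~\ref{lem:b-1} places no congruence on $s_b(\delta d_1' t) \bmod q$, so heuristically each forbidden residue is hit only a $1/q$ fraction of the time. To make this rigorous I plan to combine two operations on $t$: replacing $t$ by $t + b^L$ for $L$ larger than the digit length of $\delta d_1' t$, which increments $s_b(\delta d_1' t)$ by the fixed amount $s_b(\delta d_1')$ (no digit overlap), and shifting the base value of $t$ across $\{1,2,3,\dots\}$ to change the starting digit sum. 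Iterating the first operation produces an arithmetic progression in $\mathbb{Z}/q$ with common difference $s_b(\delta d_1') \bmod q$; combining this over different base values of $t$ and applying a Chinese remainder argument across the finitely many primes $q \mid \delta$ should then yield a simultaneous choice of $t$ completing the proof.
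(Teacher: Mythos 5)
Your ``only if'' direction is correct and is the same as the paper's. The case $\delta=\gcd(n,d)=1$ of the converse is also fine: a prime $P=n+jd$ with $P\geq b$ has $s_b(P)<P$, hence $\gcd(P,s_b(P))=1$. (Your intermediate bound $s_b(P)\leq(b-1)(\lfloor\log_b P\rfloor+1)<P$ actually fails for small $P$, e.g.\ $P=b$ gives $2b-2\not<b$, but $s_b(P)<P$ holds directly for every $P\geq b$, so this is only cosmetic.)

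The genuine gap is in the case $\delta>1$. Writing $N=\delta M$, being $b$-anti-Niven requires \emph{both} $\gcd(\delta,s_b(N))=1$ and $\gcd(M,s_b(N))=1$. Your reduction of the problem to the first condition was justified only by taking $M=P$ prime with $s_b(\delta P)<P$; the moment you ``work more flexibly'' with a composite $M=n_1+d_1'tb^K$, that justification evaporates, and nothing in your construction prevents $M$ and $s_b(N)$ from sharing a prime factor $q\nmid\delta$ (say $q=3$ dividing both). Controlling $s_b(N)$ modulo the primes dividing $\delta$ does not touch this. Patching the argument by demanding that $M$ be prime \emph{and} of your special digit-structured shape would require simultaneous control of primes and digit sums, which is far beyond Dirichlet. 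The paper sidesteps the whole issue with a different device: it first produces a multiple $\overline d$ of $d$ with $\gcd(s_b(n),s_b(\overline d))=1$ (Proposition 2.6 of \cite{hlw}), then stacks $k$ disjoint copies of $\overline d$ above the digits of $n$ so that the digit sum equals $s_b(n)+k\,s_b(\overline d)$, and chooses $k$ by Dirichlet to make this a prime $p>\max(b,\overline d)$. Then failure of anti-Nivenness forces the single condition $p\mid N$, which is dodged by exhibiting two such candidates whose difference $b^{m_k}(b-1)\overline d$ is not divisible by $p$. You would need this trick, or an equivalent one, to close your $\delta>1$ case; note also that your residue-avoidance argument for $\gcd(\delta,s_b(N))=1$ is itself only a sketch of what is essentially that cited Proposition 2.6.
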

\begin{proof}
Assuming that $\gcd(n,d,b-1)=1$, we have $\gcd(s_b(n),s_b(d),b-1)=1$ by Lemma~\ref{lem:b-1}. By Proposition~2.6 of Harrington, Litman, and Wong \cite{hlw}, there exists a positive multiple $\overline{d}$ of $d$ such that $\gcd(s_b(n),s_b(\overline{d}))=1$. Let $k$ be a positive integer such that $s_b(n)+k\cdot s_b(\overline{d})=p$ is a prime with $p>\max(b,\overline{d})$, and we further let $m_0=\lfloor\log_b(n)\rfloor+1$ and $m_i=m_{i-1}+\lfloor\log_b(\overline{d})\rfloor+1$ for all $1\leq i\leq k$. Consider $n+j\overline{d}$ and $n+j'\overline{d}$, where $j=\sum_{i=0}^kb^{m_i}$ and $j'=j-b^{m_k}+b^{m_k+1}$. Note that both $s_b(n+j\overline{d})$ and $s_b(n+j'\overline{d})$ are equal to $s_b(n)+k\cdot s_b(\overline{d})=p$, and $(n+j'\overline{d})-(n+j\overline{d})=b^{m_k}(b-1)d$ is not divisible by $p$ since $p>\max(b,\overline{d})$. Hence, at least one of $n+j\overline{d}$ and $n+j'\overline{d}$ is our desired $b$-anti-Niven number in the given $d$-AP.


Conversely, if $\gcd(n,d,b-1)=\delta>1$, then $\delta\mid\gcd(n+jd,s_b(n+jd))$ for all integers $j\geq0$ by Lemma~\ref{lem:b-1}. Therefore, $\{n+jd:j\geq0\}$ does not contain any $b$-anti-Niven numbers.
%
\end{proof}

The following theorem is a consequence of a result of Harrington, Litman, and Wong \cite{hlw} who showed that every arithmetic progression of infinite length contains at least one $b$-Niven number $n$ with $s_b(n)\neq 1$.
\begin{theorem}
For any positive integer $d$, there is no $b$-anti-Niven $d$-AP of infinite length.
\end{theorem}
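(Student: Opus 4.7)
The plan is to argue by contradiction, pulling the entire statement from the cited theorem of Harrington, Litman, and Wong. Suppose, for the sake of contradiction, that $\{n+jd:j\geq 0\}$ is a $b$-anti-Niven $d$-AP of infinite length. By the cited result, there exists some $j_0\geq 0$ such that $m=n+j_0d$ is a $b$-Niven number with $s_b(m)\neq 1$.

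Since $m$ is $b$-Niven, we have $s_b(m)\mid m$, so $\gcd(m,s_b(m))=s_b(m)$. Combined with $s_b(m)\geq 2$, this gives $\gcd(m,s_b(m))>1$, which means $m$ is not $b$-anti-Niven. This contradicts the assumption that every term of the $d$-AP is $b$-anti-Niven, completing the proof.

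There is no real obstacle here once the cited result is invoked; the entire statement reduces to observing that a $b$-Niven number with digit sum at least $2$ fails to be $b$-anti-Niven. The only subtle point worth flagging is why the strengthening $s_b(m)\neq 1$ in the cited result matters: a $b$-Niven number with digit sum equal to $1$ is a power of $b$, and such a number $b^k$ satisfies $\gcd(b^k,1)=1$, hence is itself $b$-anti-Niven. Without excluding this case, a $b$-Niven term produced inside the AP could still be $b$-anti-Niven, and no contradiction would arise. This is precisely why the stronger form of the Harrington--Litman--Wong theorem is the right tool to cite.
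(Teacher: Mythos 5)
Your proof is correct and follows exactly the route the paper intends: the paper states this theorem as an immediate consequence of the Harrington--Litman--Wong result and omits the details, which are precisely the observations you supply. Your remark on why the hypothesis $s_b(m)\neq 1$ is essential (since powers of $b$ are both $b$-Niven and $b$-anti-Niven) is a correct and worthwhile clarification of the same argument.
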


Our next theorem shows that there exist arithmetic progressions of arbitrary length containing only $b$-anti-Niven numbers.

\begin{theorem}\label{thm:arbitrary}
For every positive integer $t$, there exist positive integers $n$ and $d$ such that $\{n+jd:0\leq j\leq t-1\}$ is a $b$-anti-Niven $d$-AP of length $t$.
\end{theorem}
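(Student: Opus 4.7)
The plan is to construct $n$ and $d$ so that every term of the $d$-AP $\{n+jd:0\le j\le t-1\}$ has the same base-$b$ digit sum, equal to a fixed prime $P>t$, and so that $P$ divides none of these terms; the $b$-anti-Niven property then follows immediately, since the only prime factor of $s_b(n+jd)=P$ is $P$ itself.

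Pick a positive integer $K$ with $b^K\ge t$, choose a prime $P$ with $P>K(b-1)+t$ and $P\nmid b$, and set $S_m:=P-K(b-1)>0$. Fix $A$ large enough that $(A-K)(b-1)\ge S_m$, take $d:=b^A-1$, and consider
\[
n=(b^K-1)+\sum_{c=K}^{A-1}a_c\,b^c,
\]
where the middle digits $a_K,\dots,a_{A-1}\in\{0,1,\dots,b-1\}$ are to be chosen so that $\sum_c a_c=S_m$. Since $0\le j\le t-1<b^K$, writing $j=\sum_{i=0}^{K-1}j_i b^i$, the number $n+jd=n-j+jb^A$ can be read off digitwise with no carrying: the low $K$ digits become those of $b^K-1-j$ (that is, $b-1-j_i$ at position $i$, digit sum $K(b-1)-s_b(j)$), the middle digits are unchanged, and the previously-zero digits at positions $A,\dots,A+K-1$ become $j_0,\dots,j_{K-1}$ (digit sum $s_b(j)$). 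Summing, $s_b(n+jd)=K(b-1)+S_m=P$ for every such $j$.

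With this structure, $\gcd(n+jd,P)=1$ reduces to $n\not\equiv j(1-b^A)\pmod P$, which excludes at most $t<P$ residues of $n$ modulo $P$ and leaves some allowed $r^\ast\in\Z/P\Z$. The key remaining task is to choose the middle digits so that $n\equiv r^\ast\pmod P$ while maintaining $\sum_c a_c=S_m$. I plan to organize the middle positions into ``swap pairs'' $(K+2i,K+2i+1)$ for $i=0,1,\dots,L-1$ with $L=\lfloor S_m/(b-1)\rfloor$, fixing $a_{K+2i}+a_{K+2i+1}=b-1$ and using one additional position to absorb $S_m\bmod(b-1)$. Within each pair the free choice of $a_{K+2i}\in\{0,\dots,b-1\}$ shifts $n\bmod P$ by an integer multiple of $b^{K+2i}(b-1)$, which is a unit modulo $P$ since $P\nmid b$ and $P>b-1$. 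Combining sufficiently many pairs (on the order of $\log_b P$) with appropriately chosen exponents then yields, via a base-$b$ expansion argument modulo $P$, a surjection from valid configurations onto $\Z/P\Z$, so in particular some configuration realizes $n\equiv r^\ast\pmod P$.

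The main obstacle is making this last surjectivity step precise: one must verify that the $b$-ary choices across enough pair positions genuinely cover all of $\Z/P\Z$, compatibly with the global constraint $\sum_c a_c=S_m$ and the digit bounds $a_c\in[0,b-1]$. Once this residue-hitting lemma is established, the construction is complete, $d=b^A-1$ is a positive common difference, and $n$ and $n+d,\dots,n+(t-1)d$ form the desired $b$-anti-Niven $d$-AP of length $t$.
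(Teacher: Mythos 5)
Your construction of the arithmetic progression itself is sound: with $d=b^A-1$ and $n$ built from a low block of digits $b-1$, a free middle block, and zeros on top, the identity $n+jd=(n-j)+jb^A$ does give $s_b(n+jd)=K(b-1)+S_m=P$ for all $0\le j\le t-1$, by the same borrow-free digit bookkeeping the paper uses. The genuine gap is exactly the step you flag as ``the main obstacle'': the residue-hitting lemma is not proved, and in the form you state it (a surjection onto $\Z/P\Z$) it is doubtful. Because your pairing constraint $a_{K+2i}+a_{K+2i+1}=b-1$ leaves only the even-indexed positions free, the achievable values of $n$ modulo $P$ differ by $(b-1)b^K\sum_i a_i(b^2)^i$ with $a_i\in\{0,\dots,b-1\}$; these are base-$b^2$ expansions with digits restricted to $\{0,\dots,b-1\}$, a sparse set of integers with large gaps, and there is no a priori reason its image covers $\Z/P\Z$ (indeed for most $P$ it will not unless you take far more pairs than $\log_b P$ and prove an equidistribution statement). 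A weaker claim --- that the achievable set has more than $t$ elements and hence meets the complement of your excluded set --- is more plausible and probably salvageable by enlarging $P$ and counting distinct residues, but as written the argument is incomplete at its crux.

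It is worth comparing with how the paper avoids this difficulty entirely. Rather than forcing the common digit sum to be a prime $P$ and then dodging divisibility by $P$, the paper arranges for the common difference $d=b(b^m-1)(m(b-1)+1)$ to be divisible by the common digit sum $m(b-1)+1$ and starts the progression at $n=d+1$. Every term is then congruent to $1$ modulo its own digit sum, so coprimality is immediate --- no primality, no residue selection, no surjectivity lemma. If you replace your choice of $P$ and the residue-hitting step with this ``make each term $\equiv 1$ modulo the fixed digit sum'' device (e.g., by taking $d$ to be a suitable multiple of $K(b-1)+S_m$), your digit construction goes through and the proof closes.
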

\begin{proof}
Let $m$ be a positive integer such that $b^m/(m(b-1)+1)\geq t$, and let $d=b(b^m-1)(m(b-1)+1)$. Consider the $d$-AP $\{(d+1)+jd:0\leq j\leq t-1\}$. For all $0\leq j\leq t-1$, note that $\widetilde{j}=(j+1)(m(b-1)+1)\leq b^m$. Hence,
\begin{align*}
s_b((d+1)+jd)&=s_b(\widetilde{j}(b^{m+1}-b)+1)\\
&=s_b\big((\widetilde{j}-1)b^{m+1}+b(b^m-1-(\widetilde{j}-1))+1\big)\\
&=s_b(\widetilde{j}-1)+s_b\big(b^m-1-(\widetilde{j}-1)\big)+1\\
&=s_b(\widetilde{j}-1)+s_b\left(\sum_{j=0}^{m-1}(b-1)b^j-(\widetilde{j}-1)\right)+1\\
&=s_b(\widetilde{j}-1)+m(b-1)-s_b(\widetilde{j}-1)+1\\
&=m(b-1)+1.
\end{align*}
Since $(d+1)+jd\equiv1\pmod{m(b-1)+1}$, we conclude that $(d+1)+jd$ is $b$-anti-Niven for all $0\leq j\leq t-1$.
\end{proof}

Although Theorem~\ref{thm:arbitrary} shows that there are $b$-anti-Niven $d$-APs of arbitrary length, the maximum length of a $b$-anti-Niven $d$-AP is bounded above by $b-2$ for many values of $b$ and $d$, as shown in the following theorem.

\begin{theorem}\label{thm:bound}
For $b>2$ and a positive integer $d$, let $p$ be the smallest prime such that $p\mid(b-1)$ and $p\nmid d$.  Then every $b$-anti-Niven $d$-AP has length at most $p-1$.
\end{theorem}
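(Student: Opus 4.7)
The plan is to show that any $d$-AP of length $p$ must contain a term that is not $b$-anti-Niven, which immediately yields the claimed bound of $p-1$.

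The key observation is a pigeonhole argument on residues modulo $p$. Since $p \nmid d$, the integer $d$ is invertible modulo $p$, so the residues $\{n+jd \pmod p : 0 \leq j \leq p-1\}$ form a complete residue system mod $p$. In particular, there exists some index $j_0 \in \{0,1,\ldots,p-1\}$ such that $p \mid n + j_0 d$.

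Now invoke Lemma~\ref{lem:b-1} with $\delta = p$: because $p$ is a prime divisor of $b-1$, divisibility by $p$ transfers from the integer to its base-$b$ digital sum. Thus $p \mid s_b(n + j_0 d)$ as well. Consequently $p$ divides $\gcd(n + j_0 d,\, s_b(n + j_0 d))$, so this gcd is at least $p > 1$, and $n + j_0 d$ is not $b$-anti-Niven. Therefore any $d$-AP of length $p$ (or more) fails to be a $b$-anti-Niven $d$-AP, and the maximum length is at most $p-1$.

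There is no real obstacle here; the argument is essentially a one-shot combination of the pigeonhole principle with Lemma~\ref{lem:b-1}. The only subtlety worth checking is that $p$ is guaranteed to exist under the hypothesis $b > 2$ (with the understanding that when no such prime exists the statement is vacuous or trivial) — since $b > 2$ forces $b - 1 \geq 2$, so $b-1$ has at least one prime factor, and the definition of $p$ picks out the smallest one not dividing $d$; the bound is meaningful whenever such a $p$ exists.
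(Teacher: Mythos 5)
Your proof is correct and follows exactly the paper's argument: since $p\nmid d$, any $d$-AP of length $p$ contains a multiple of $p$, which by Lemma~\ref{lem:b-1} also has digit sum divisible by $p$ and hence is not $b$-anti-Niven. You merely spell out the pigeonhole step that the paper states in one line.
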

\begin{proof}
Since $p\nmid d$, every $d$-AP of length $p$ contains a multiple of $p$. By Lemma~\ref{lem:b-1}, this multiple of $p$ is not $b$-anti-Niven. Hence, the maximum length of a $d$-AP that contains only $b$-anti-Niven numbers is at most $p-1$.
\end{proof}

\section{Maximum Length $b$-anti-Niven $d$-APs}\label{sec:dAP}
Theorem~\ref{thm:bound} in the previous section gives a bound on the maximum length of certain $b$-anti-Niven $d$-APs. We begin this section by demonstrating that there are instances when this bound is achieved.  Theorems~\ref{thm:b>2,1AP} and \ref{thm:2AP} investigate $1$-APs, i.e. sequences of consecutive $b$-anti-Niven numbers,  and $2$-APs, respectively. The following lemma will be a common tool in establishing these two theorems.

%
%

\begin{lemma}\label{lem:b^m=b}
For all finite collections of distinct primes $q_1,q_2,\dotsc,q_t$, there exist infinitely many positive integers $m$ such that $b^m\equiv b\pmod{q}$.
\end{lemma}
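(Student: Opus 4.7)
The plan is to reduce the statement to Fermat's little theorem via a simultaneous solution of congruences. I read the conclusion as $b^m\equiv b\pmod{q_i}$ holding for every $i\in\{1,2,\dotsc,t\}$ (equivalently, modulo the product $q=q_1q_2\cdots q_t$, by the Chinese Remainder Theorem).

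First, I would split the primes into two groups according to whether they divide $b$. If $q_i\mid b$, then for every $m\geq 1$ we have $b^m\equiv 0\equiv b\pmod{q_i}$, so such primes impose no constraint on $m$. If instead $\gcd(b,q_i)=1$, then Fermat's little theorem gives $b^{q_i-1}\equiv 1\pmod{q_i}$, and consequently $b^m\equiv b\pmod{q_i}$ whenever $m\equiv 1\pmod{q_i-1}$.

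Next, I would set
\[
L=\lcm\{q_i-1:1\leq i\leq t,\ q_i\nmid b\},
\]
interpreting $L=1$ if the set is empty. Then every positive integer of the form $m=1+kL$ with $k\geq 0$ satisfies $m\equiv 1\pmod{q_i-1}$ for each $q_i$ not dividing $b$, hence $b^m\equiv b\pmod{q_i}$ for every $i$. Since $k$ ranges over all nonnegative integers, this produces infinitely many admissible $m$.

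There is essentially no obstacle: the argument is an elementary application of Fermat's little theorem combined with a CRT-style packaging of congruences. The only point requiring a moment of care is the separate treatment of primes $q_i$ that divide $b$, where the multiplicative-order argument fails but the congruence holds trivially because both sides vanish modulo $q_i$.
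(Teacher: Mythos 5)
Your proof is correct and follows essentially the same route as the paper: both split the primes according to whether they divide $b$ (where the congruence holds trivially) and then force $m\equiv 1$ modulo a common period of $b$ for the remaining primes, you via Fermat's little theorem and $\lcm\{q_i-1\}$, the paper via Euler's theorem and $\varphi(q_1\cdots q_{t'})$. These differ only in the choice of exponent (the lcm divides $\varphi$ of the product), so there is nothing substantive to distinguish them.
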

\begin{proof}
Without loss of generality, assume that there exists $0\leq t'\leq t$ such that $q_i\nmid b$ for all $1\leq i\leq t'$ and $q_i\mid b$ for all $t'+1\leq i\leq t$. By Euler's theorem, $b^{k\varphi(q_1q_2\dotsb q_{t'})}\equiv1\pmod{q_1q_2\dotsb q_{t'}}$ for every positive integer $k$. Hence, $m=k\varphi(q_1q_2\dotsb q_{t'})+1$ is our desired choice of integer.
\end{proof}


\begin{theorem}\label{thm:b>2,1AP}
For $b>2$, let $p$ be the smallest prime such that $p\mid(b-1)$. Then the maximum length of a sequence of consecutive $b$-anti-Niven numbers is $p-1$. Furthermore, there exist infinitely many such sequences of length $p-1$.
\end{theorem}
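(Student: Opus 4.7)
The plan is to combine an application of Theorem~\ref{thm:bound} with an explicit construction. The upper bound of $p-1$ on the length of any sequence of consecutive $b$-anti-Niven numbers follows immediately from Theorem~\ref{thm:bound} taken with $d=1$, since the hypothesis $p\nmid d$ is trivially satisfied. The remaining work is to exhibit infinitely many sequences of length $p-1$.

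For the construction I would take $n=b^m$, choosing $m$ via Lemma~\ref{lem:b^m=b} applied to the (finite) collection of primes strictly less than $p$. Since $p\le b-1$, every $j\in\{0,1,\dotsc,p-2\}$ is a single digit in base $b$, so the addition $b^m+j$ produces no carries and $s_b(b^m+j)=1+j$. The anti-Niven condition for $b^m+j$ thus reduces to $\gcd(b^m+j,\,1+j)=1$, which using $b^m+j\equiv b^m-1\pmod{1+j}$ rewrites as $\gcd(b^m-1,\,1+j)=1$. It therefore suffices to check that no prime $q\le p-1$ divides $b^m-1$.

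The heart of the argument is this verification, and it splits into two cases that together clarify why Lemma~\ref{lem:b^m=b} is phrased with $b^m\equiv b$ rather than $b^m\equiv 1$. When $q\mid b$ we have $b^m-1\equiv -1\pmod q$, and there is nothing to check. When $q\nmid b$, the choice of $m$ gives $b^m\equiv b\pmod q$, hence $b^{m-1}\equiv 1\pmod q$, so $\ord_q(b)\mid m-1$. Because $q<p$ and $p$ is the smallest prime divisor of $b-1$, we have $q\nmid b-1$, forcing $\ord_q(b)\ge 2$ and therefore $m\not\equiv 0\pmod{\ord_q(b)}$, which rules out $q\mid b^m-1$.

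The main obstacle I anticipate is really just bookkeeping: making sure the no-carries claim applies uniformly over the entire range $0\le j\le p-2$ (this is where the bound $p\le b-1$ enters) and verifying that the conclusion of Lemma~\ref{lem:b^m=b} supplies precisely the order information needed in the case $q\nmid b$. Since distinct $m$ yield distinct values of $n=b^m$ and hence distinct sequences, the infinitude asserted in Lemma~\ref{lem:b^m=b} immediately produces infinitely many sequences of length $p-1$, completing the proof.
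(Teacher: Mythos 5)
Your proposal is correct and follows essentially the same route as the paper: the upper bound via Theorem~\ref{thm:bound} with $d=1$, and the construction $\{b^m+j:0\leq j\leq p-2\}$ with $m$ chosen by Lemma~\ref{lem:b^m=b} so that $s_b(b^m+j)=j+1$ and $\gcd(b^m+j,j+1)=1$. The only cosmetic difference is that the paper verifies the gcd condition by computing $b^m+j\equiv b-1\pmod{q}$ directly, whereas you reduce it to $q\nmid b^m-1$ and argue via $\ord_q(b)$; both amount to the same observation that no prime $q<p$ divides $b-1$.
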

\begin{proof}
By Theorem~\ref{thm:bound}, the maximum length of a $b$-anti-Niven $1$-AP is at most $p-1$. It remains to show that such sequences occur infinitely often. Let $q_1,q_2,\dotsc,q_t$ be all primes less than $p$. By Lemma~\ref{lem:b^m=b}, there exist infinitely many positive integers $m$ such that $b^m\equiv b\pmod{q_1q_2\dotsb q_t}$. Now, for all $0\leq j\leq p-2$, we have $s_b(b^m+j)=j+1$.  Since $j+1<p$, for any prime divisor $q$ of $j+1$, we have $b^m+j\equiv b+j\equiv b-1\not\equiv 0\pmod{q}$. Therefore, $\gcd(b^m+j,s_b(b^m+j))=1$, implying that $\{b^m+j:0\leq j\leq p-2\}$ forms a sequence of $p-1$ consecutive $b$-anti-Niven numbers.
\end{proof}

\begin{theorem}\label{thm:2AP}
Let $b>2$ be such that $b\neq2^r+1$ for any integer $r$, and let $p$ be the smallest odd prime such that $p\mid(b-1)$. Then the maximum length of a $b$-anti-Niven $2$-AP is $p-1$. Furthermore, there exist infinitely many such sequences of length $p-1$.
\end{theorem}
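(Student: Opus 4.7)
The upper bound $p-1$ is immediate from Theorem~\ref{thm:bound} applied with $d=2$: since $b\neq 2^r+1$, the integer $b-1$ has an odd prime factor, and its smallest odd prime factor $p$ is exactly the smallest prime dividing $b-1$ but not $d=2$. So every $b$-anti-Niven $2$-AP has length at most $p-1$.

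For the existence of infinitely many $2$-APs of length $p-1$, my plan is to take $n = b^m + c$ for a fixed constant $c$ depending on $b$, and then vary $m$. Let $R$ denote the product of the distinct odd primes dividing $b-1$, and set
\[
c = \begin{cases} 1 & \text{if } b \text{ is even,}\\ 1+R & \text{if } b \text{ is odd.}\end{cases}
\]
In either case $c \equiv 1 \pmod{R}$, and $c$ has the parity opposite to $b^m$ (note $R$ is a product of odd primes, hence $1+R$ is even), so $n = b^m + c$ is odd for every $m$. A short estimate using $R \leq (b-1)/2$ (when $b$ is odd) and $p \leq b-1$ gives $c + 2(p-2) < 2b$, so $c + 2j$ has at most two base-$b$ digits for each $j \in [0, p-2]$. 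Since $b \geq 3$, the addition $b^m + (c+2j)$ causes no carries, so
\[
s_b(n+2j) \;=\; 1 + s_b(c + 2j) \;\leq\; 2b-1.
\]
Applying Lemma~\ref{lem:b^m=b} to the finite set of odd primes $q \leq 2b-1$ with $q \nmid b-1$ produces infinitely many $m$ such that $b^m \equiv b \pmod{q}$ for every such $q$.

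For such an $m$, I would check that for each $j \in [0, p-2]$ and each prime $q \mid s_b(n+2j)$ one has $q \nmid n + 2j$. The case $q = 2$ is immediate since $n+2j$ is odd. When $q$ is odd and $q \mid b - 1$ (so $q \mid R$ and $q \geq p$), Lemma~\ref{lem:b-1} ties $q \mid n+2j$ to $q \mid s_b(n+2j)$; using $b \equiv c \equiv 1 \pmod{q}$, I compute $n + 2j \equiv 2(j+1) \pmod{q}$, which is nonzero because $1 \leq j+1 \leq p-1 < p \leq q$. When $q$ is odd and $q \nmid b-1$, the choice of $m$ yields $n + 2j \equiv b + c + 2j \pmod{q}$, and a case split on whether $c + 2j < b$ or $b \leq c + 2j < 2b$ shows $n + 2j \equiv b - 1$ or $2(b-1) \pmod{q}$ respectively, each nonzero modulo $q$ since $q \nmid b-1$ and $q$ is odd.

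Each valid $m$ thus gives a $b$-anti-Niven $2$-AP of length $p-1$, and the infinitely many valid $m$ give infinitely many distinct $n$. The main obstacle I anticipate is justifying the slightly unusual choice $c \equiv 1 \pmod{R}$ (rather than just $c \equiv 1 \pmod{p}$): for odd primes $q > p$ dividing $b-1$, Lemma~\ref{lem:b-1} forces $q \mid n+2j$ and $q \mid s_b(n+2j)$ to coincide, so one cannot adjust them separately via the choice of $m$; the congruence $c \equiv 1 \pmod{R}$ is the uniform condition that directly forces $n + 2j \not\equiv 0 \pmod{q}$ for all $j \in [0, p-2]$ and all such $q$ at once.
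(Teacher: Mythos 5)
Your proof is correct. The upper bound is handled identically via Theorem~\ref{thm:bound}, and your construction uses the same underlying machinery as the paper's---Lemma~\ref{lem:b^m=b} to force $b^m\equiv b$ modulo all relevant small primes, followed by a prime-by-prime check of $\gcd(n+2j,s_b(n+2j))$---but your choice of starting term is genuinely different in the odd case and more careful in the even case. The paper splits on the parity of $b$: for $b$ even it uses exactly your progression $b^m+2j+1$, while for $b$ odd it uses a window straddling $b^m+b$, namely $\{b^m+b-p+2j\}\cup\{b^m+b+1+2j\}$, exploiting that the digit sums are $1+b-p+2j$ to the left of the carry and $3+2j$ to the right. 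Your uniform shift $c\equiv1\pmod R$ (with $R$ the product of the distinct odd primes dividing $b-1$) replaces that window by a single congruence that disposes of all primes $q\mid(b-1)$ at once via $n+2j\equiv2(j+1)\pmod q$; this is cleaner and treats both parities of $b$ in one framework. Moreover, your explicit case split on $c+2j<b$ versus $b\le c+2j<2b$ handles a carry that the paper's even case silently assumes away: when $b-1$ is prime (so $p=b-1$) and $j$ is near $p-2$, one has $2j+1\ge b$, and the paper's asserted digit sum $s_b(b^m+2j+1)=2(j+1)$ is then false (e.g., $b=6$, $j=3$ gives digit sum $3$, not $8$); the paper's conclusion still holds, but precisely because of the congruence $n+2j\equiv2(b-1)\pmod q$ that you verify explicitly. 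In short, your argument is a valid and slightly more robust variant of the paper's proof.
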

\begin{proof}
By Theorem~\ref{thm:bound}, the maximum length of a $b$-anti-Niven $2$-AP is at most $p-1$. It remains to show that such sequences occur infinitely often. Let $q_1,q_2,\dotsc,q_t$ be all primes less than or equal to $b$. By Lemma~\ref{lem:b^m=b}, there exist infinitely many positive integers $m$ such that $b^m\equiv b\pmod{q_1q_2\dotsb q_t}$.

Consider the case when $b$ is even. For all $0\leq j\leq p-2$, we have $s_b(b^m+2j+1)=2(j+1)$. Since $j+1<p$, for any prime divisor $q$ of $2(j+1)$, we have $b^m+2j+1\equiv b+2j+1\equiv b-1\not\equiv0\pmod{q}$. Therefore, $\gcd(b^m+2j+1,s_b(b^m+2j+1))=1$, implying that $\{b^m+2j+1:0\leq j\leq p-2\}$ forms a $b$-anti-Niven $2$-AP of length $p-1$.

Next, consider the case when $b$ is odd. For all $0\leq j\leq(p-1)/2$, we have $s_b(b^m+b-p+2j)=1+b-p+2j$. Since $1+b-p+2j\leq b$, for any prime divisor $q$ of $1+b-p+2j$, we have $b^m+b-p+2j\equiv 2b-p+2j\equiv 2b-p+2j-2(1+b-p+2j)\equiv p-2j-2\pmod{q}$. Note that $-1\leq p-2j-2\leq p-2$, so none of these odd numbers share a common prime factor with $b-1$. Hence, $\gcd(p-2j-2,1+b-p+2j)=\gcd(p-2j-2,b-1)=1$, implying that $p-2j-2\not\equiv0\pmod{q}$. Thus, $\gcd(b^m+b-p+2j,s_b(b^m+b-p+2j))=1$ when $0\leq j\leq (p-1)/2$.

Furthermore, for all $0\leq j\leq(p-5)/2$, we have $s_b(b^m+b+1+2j)=3+2j$. Since $3+2j\leq p-2<b$, for any prime divisor $q$ of $3+2j$, we have $b^m+b+1+2j\equiv2b+1+2j\equiv2b+1+2j-(3+2j)\equiv2(b-1)\pmod{q}$. Note that $2(b-1)\not\equiv0\pmod{q}$ since $q$ is an odd prime less than $p$. Thus, $\gcd(b^m+b+1+2j,s_b(b^m+b+1+2j))=1$ when $0\leq j\leq(p-5)/2$. Therefore, $\{b^m+b-p+2j:0\leq j\leq(p-1)/2\}\cup\{b^m+b+1+2j:0\leq j\leq(p-5)/2\}$ forms a $b$-anti-Niven $2$-AP of length $p-1$.
\end{proof}

So far, the theorems in this section have shown that the bound provided in Theorem~\ref{thm:bound} is attainable. However, there are infinitely many instances when the maximum length of $b$-anti-Niven $d$-APs does not attain this bound. The following theorem illustrates one such instance.

\begin{theorem}\label{thm:smallerthanbound}
Let $b\geq6$ be even, and let $3\leq d\leq b/2$ be an odd integer. Then the maximum length of a $b$-anti-Niven $d$-AP is at most $\lceil2b/d\rceil+2$.
\end{theorem}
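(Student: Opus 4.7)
The plan is to proceed by contradiction: assume a $b$-anti-Niven $d$-AP $\{n+jd:0\le j\le t-1\}$ of length $t\ge\lceil 2b/d\rceil+3$ exists. For each term I write $n+jd=Qb+R$ with $0\le R<b$ and call $Q$ its \emph{block}. Since $b$ is even, the term is even iff $R$ is even, and in that case its digit sum $s_b(Q)+R$ is even iff $s_b(Q)$ is even. So the proof will succeed once I exhibit an even AP term lying in a block $Q$ with $s_b(Q)$ even: the common factor $2$ then violates anti-Niven-ness.

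The central tool is a parity lemma: for every $Q\ge 0$, the values $s_b(Q),s_b(Q+1),s_b(Q+2)$ cannot all share the same parity. Indeed, $s_b(Q+1)-s_b(Q)=1-k_Q(b-1)$, where $k_Q$ is the number of trailing digits of $Q$ equal to $b-1$; since $b-1$ is odd, the parity of $s_b$ is preserved iff $k_Q$ is odd. But if $k_Q\ge 1$ then $Q+1$ ends in a $0$, so $k_{Q+1}=0$; hence $k_Q$ and $k_{Q+1}$ cannot both be odd, so a parity change must occur among any three consecutive values of $s_b$.

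I organize the rest by the number $B$ of consecutive blocks $Q_1,Q_2,\dots,Q_B$ met by the AP. Because $d\le b/2$, every interior block $Q_i$ (with $1<i<B$) is fully contained in $[n,n+(t-1)d]$ and carries at least $\lfloor b/d\rfloor\ge 2$ AP terms; since $d$ is odd these terms have opposite parities, so every interior block contains an even AP term. When $B\ge 5$, the parity lemma applied to $(Q_2,Q_3,Q_4)$ produces an interior block with even $s_b$, which together with its even AP term yields the contradiction. For $B\in\{3,4\}$, if any interior block has even $s_b$ we are done; otherwise the lemma forces even $s_b$ at the endpoint blocks ($Q_1$ or $Q_3$ when $B=3$, and both $Q_1$ and $Q_4$ when $B=4$). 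If any such implicated endpoint block carries at least two AP terms, we again find an even one and finish. The remaining sub-case is that every implicated endpoint block contributes a single AP term which is odd; this forces $R_0\ge b-d$ at the left endpoint and/or $R_{t-1}<d$ at the right endpoint, and substituting into $(t-1)d=(B-1)b+R_{t-1}-R_0$ yields $(t-1)d<2b+2d$, i.e.\ $t<\lceil 2b/d\rceil+3$, contradicting the hypothesis. The trivial cases $B\le 2$ are immediate since then $(t-1)d<2b$ forces $t\le\lceil 2b/d\rceil$.

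The main obstacle I anticipate is the length computation in the last sub-case. One must translate ``a single odd AP term per implicated endpoint block'' into the inequalities $R_0\ge b-d$ and $R_{t-1}<d$, combine them with $(t-1)d=(B-1)b+R_{t-1}-R_0$, and verify that the resulting bound is just sharp enough to exclude $t\ge\lceil 2b/d\rceil+3$. The hypothesis $d\le b/2$ enters precisely in ensuring $\lfloor b/d\rfloor\ge 2$, so that every interior block contains two AP terms of opposite parities, which is the engine behind every case of the argument.
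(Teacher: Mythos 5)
Your proposal is correct and follows essentially the same route as the paper's proof: both rest on the observations that two AP terms in a block $[Qb,Qb+b-1]$ yield an even term (since $d$ is odd and $b$ is even), forcing $s_b(Q)$ to be odd for anti-Niven-ness, and that $s_b(Q),s_b(Q+1),s_b(Q+2)$ cannot all be odd. Your case analysis on the number $B$ of blocks, with the endpoint length inequality $(t-1)d=(B-1)b+R_{t-1}-R_0$, is a more explicit (and welcome) justification of the step the paper states tersely, namely that a progression of length $\lceil 2b/d\rceil+3$ must meet three consecutive blocks in at least two terms each.
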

Note that when $b-1$ is an odd prime, the bound given by Theorem~\ref{thm:bound} is $b-2$, while the bound given by Theorem~\ref{thm:smallerthanbound} is strictly smaller when $b>15$.
\begin{proof}[Proof of Theorem~\ref{thm:smallerthanbound}]
Suppose there are two $b$-anti-Niven numbers from a $d$-AP in the interval $[ab,ab+b-1]$. Let these two numbers be $ab+a_0$ and $ab+a_0+d$ for some nonnegative integers $a$ and $a_0\leq b-1-d$. Recalling that $d$ is odd, there exists $\chi\in\{0,1\}$ so that $a_0+\chi d$ is even. As a result, $ab+a_0+\chi d$ is also even since $b$ is even. Since $ab+a_0+\chi d$ is $b$-anti-Niven, $s_b(ab+a_0+\chi d)=s_b(a)+a_0+\chi d$ is odd, implying that $s_b(a)$ is odd.

Note that there are at most $\lceil2b/d\rceil$ terms from a $d$-AP in the interval $[ab,(a+1)b+(b-1)]$. Hence, if there is a $b$-anti-Niven $d$-AP of length $\lceil2b/d\rceil+3$, then there exists a nonnegative integer $a$ such that each of the intervals $[ab,ab+b-1]$, $[(a+1)b,(a+1)b+b-1]$, and $[(a+2)b,(a+2)b+b-1]$ contains at least two terms from this $d$-AP. From the above observation, we conclude that $s_b(a)$, $s_b(a+1)$, and $s_b(a+2)$ are all odd, which is a contradiction. This establishes an upper bound for the maximum length of a $b$-anti-Niven $d$-AP as stated in the theorem.
\end{proof}

We now turn our attention to $d$-APs for which Theorem~\ref{thm:bound} does not apply.

\begin{theorem}\label{thm:bevenb-1AP}
Let $b$ be even. Then the maximum length of a $b$-anti-Niven $(b-1)$-AP is $2b+1$. Furthermore, there exist infinitely many such sequences of length $2b+1$.
\end{theorem}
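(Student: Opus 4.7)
The upper bound $L \le 2b+1$ follows from a parity-of-digit-sum argument in the spirit of Theorem~\ref{thm:smallerthanbound}. Suppose for contradiction that $\{n+j(b-1) : 0 \le j \le 2b+1\}$ is $b$-anti-Niven, and write $n = q_0 b + r_0$ with $0 \le r_0 \le b-1$. Since $b$ is even and $b-1$ is odd, parities in the AP alternate, yielding exactly $b+1$ even terms. Any even term, written as $mb + r$, has $r$ even (because $b$ is even), so $b$-anti-Niven-ness forces its digit sum $s_b(m) + r$ to be odd, hence $s_b(m)$ itself to be odd. I will then track the block index $q_j$ as $j$ grows: it increments by $1$ at every step except at the ``double-visit'' steps $r_j = 0$, which fall at $j = r_0$ and $j = r_0 + b$ within $\{0,\dots,2b+1\}$. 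A direct case check on the two phase transitions shows that the even AP terms occupy two consecutive-block pairs $(q_0 + r_0,\, q_0 + r_0 + 1)$ and $(q_0 + r_0 + b - 1,\, q_0 + r_0 + b)$, so $s_b$ must be odd on all four of these blocks.

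For a pair $(q, q+1)$ with both digit sums odd, the identity $s_b(q+1) - s_b(q) = 1 - k(b-1)$, where $k$ is the number of trailing $(b-1)$-digits of $q$, must have even right-hand side; since $b-1$ is odd, this forces $k$ to be odd, in particular $k \ge 1$. Applied to $q = q_0 + r_0$, this means $q_0 + r_0$ ends in at least one base-$b$ digit $b-1$, so adding $b-1$ triggers a carry cascade and the resulting $q_0 + r_0 + b - 1$ ends in digit $b-2$, yielding zero (hence even) trailing $(b-1)$-digits. This contradicts the same condition applied to the second pair, proving $L \le 2b + 1$.

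For the construction, I take $r_0 = b - 1$ and $q_0 = M b^K$, so $n = M b^{K+1} + (b-1)$. For $K$ large enough that small additions do not interact with the digits of $M$, a direct calculation gives the digit sums of the $2b+1$ terms as $\tau_1 := s_b(M) + (b-1)$ on $j \in \{0,\dots,b-1\} \cup \{b+1,\dots,2b-1\}$ and $\tau_2 := s_b(M) + 2(b-1)$ on $j \in \{b, 2b\}$. Anti-Niven-ness then requires: (i) $s_b(M)$ even, so that $\tau_1$ is odd at the even AP terms; (ii) $\gcd(M, b-1) = 1$, so that no prime divisor of $b-1$ divides a term and its digit sum simultaneously; and (iii) for each prime $p$ dividing $\tau_1 \tau_2$ but not $b(b-1)$, a noncongruence of $M b^{K+1}$ modulo $p$. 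I will pick an even $\sigma = s_b(M)$ such that neither $\sigma + (b-1)$ nor $\sigma + 2(b-1)$ has a small prime factor $p \le 2b$ (other than possibly $b+1$) that would render (iii) infeasible; then apply the Chinese Remainder Theorem to fix $M$ modulo the remaining relevant primes, and Lemma~\ref{lem:b^m=b} to produce infinitely many $K$ with $b^{K+1}$ in the prescribed residue class modulo their product. The main obstacle is exhibiting an admissible $\sigma$ uniformly in $b$; this requires a short case analysis on the small prime divisors of $b$, $b-1$, and $b+1$, which can be handled by varying $\sigma$ among even positive integers coprime to $b(b-1)$.
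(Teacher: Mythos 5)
Your upper-bound argument is correct and is essentially the paper's: both proofs rest on the observation that an even term $mb+r$ (with $0\le r\le b-1$ even) forces $s_b(m)$ to be odd, and then extract a parity contradiction from two pairs of consecutive block indices. The paper phrases this by locating the two multiples of $b$ in the progression and trapping it inside a window $\{cb^2+j(b-1):1\le j\le 2b+1\}$, whereas you argue by contradiction via trailing $(b-1)$-digits; the underlying computation is the same.

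The construction of infinitely many progressions of length $2b+1$ is where your proposal has genuine gaps. First, your admissibility condition on $\sigma$ is literally unsatisfiable: $\tau_2=\sigma+2(b-1)$ is even whenever $\sigma$ is, so it always has the prime factor $2\le 2b$. The correct requirement is asymmetric: for a prime $p\mid\tau_1$ with $p\nmid(b-1)$ there are $2b-1$ forbidden residues $-i(b-1)\bmod p$, which exhaust $\Z/p\Z$ for every $p\le b$ (infeasible), leave exactly one admissible residue for $b<p\le 2b$, and are harmless for $p>2b$; whereas for $p\mid\tau_2$ only two residues are forbidden, so the factor $2$ of $\tau_2$ is innocuous because the two terms with digit sum $\tau_2$ are odd. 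Second, and more seriously, you never produce $M$: you need an integer with a \emph{prescribed digit sum} $\sigma$ lying simultaneously in \emph{prescribed residue classes} modulo every relevant prime dividing $\tau_1\tau_2$, and those primes depend on $\sigma=s_b(M)$, so the construction is circular. The Chinese Remainder Theorem does not hand you an integer with a given digit sum; for instance, writing $M$ as a sum of powers of $b$ with digit sum $\sigma$ constrains $M$ modulo each prime to the $\sigma$-fold sumset of the cyclic group generated by $b$, which need not be all of $\Z/p\Z$. The paper breaks exactly this circularity with a device you would need to replace: it sets $c=\sum_i b^{r_i}(b^m+1)$ so that $\tau_1=s_b(c)+b-1$ equals $P=b^m+1$, which by construction \emph{divides} $c$ and is coprime to every integer up to $2b$, making all the $\tau_1$-coprimality conditions automatic; the remaining conditions for $\tau_2=b(b^{m-1}+1)$ are then arranged by placing the exponents $r_i$ in prescribed residue classes. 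Without such a mechanism, or a worked-out existence proof for your $\sigma$ and $M$, the second assertion of the theorem remains unproved.
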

\begin{proof}
Suppose there is a $b$-anti-Niven $(b-1)$-AP $\mathcal{S}$ of length at least $2b+1$. Since $\gcd(b-1,b)=1$, there exist two terms in $\mathcal{S}$ that are multiples of $b$. Let these two terms be $ab$ and $ab+(b-1)b$ for some positive integer $a$. Note that $ab$ is even, so $s_b(ab)=s_b(a)$ is odd. Since $ab+2(b-1)=(a+1)b+b-2$ is an even term in $\mathcal{S}$, the digit sum $s_b((a+1)b+b-2)=s_b(a+1)+b-2$ must be odd, implying that $s_b(a+1)$ is also odd. Hence, $a=cb+b-1$ for some nonnegative integer $c$, where $s_b(c)$ is even. In other words, $ab=cb^2+(b-1)b$ and $ab+(b-1)b=(c+1)b^2+(b-2)b$. Also, $s_b(c+1)$ is odd since $s_b((c+1)b^2+(b-2)b)=s_b(c+1)+b-2$ is odd.

Now, note that $cb^2$ and $s_b(cb^2)=s_b(c)$ are even, so $cb^2$ is not in $\mathcal{S}$. Similarly, $(c+1)b^2+(b-1)b+b-2$ and $s_b((c+1)b^2+(b-1)b+b-2)=s_b(c+1)+b-1+b-2$ are even, so $(c+1)b^2+(b-1)b+b-2=cb^2+(2b+2)(b-1)$ is also not in $\mathcal{S}$. Therefore, $\mathcal{S}$ is a subsequence of $\{cb^2+j(b-1):1\leq j\leq2b+1\}$, thus the maximum length of a $b$-anti-Niven $(b-1)$-AP is at most is $2b+1$.

It remains to show that such sequences occur infinitely often. Let $c$ be a nonnegative integer such that $s_b(c+1)=s_b(c)+1$. Then it is not difficult to observe that $s_b(cb^2+j(b-1))=s_b(c)+b-1$ for $1\leq j\leq b$ and $b+2\leq j\leq 2b$, and $s_b(cb^2+j(b-1))=s_b(c)+2(b-1)$ for $j\in\{b+1,2b+1\}$. Hence, it suffices to show that there exist infinitely many positive integers $c$ such that
\begin{itemize}
\item $b\nmid(c+1)$,
\item $\gcd(cb^2+j(b-1),s_b(c)+b-1)=1$ for $1\leq j\leq b$ and $b+2\leq j\leq 2b$, and
\item $\gcd(cb^2+j(b-1),s_b(c)+2(b-1))=1$ for $j\in\{b+1,2b+1\}$.
\end{itemize}
Let $p_1,p_2,\dotsc,p_t$ be all primes less than or equal to $2b$. By Lemma~\ref{lem:b^m=b}, there exist infinitely many positive integers $m$ such that $b^{m+1}\equiv b\pmod{p_1p_2\dotsb p_t}$. In other words, $p_1p_2\dotsb p_t\mid b(b^m-1)$. Let $P=b^m+1$. Since $\gcd(b^m+1,b)=1$ and $\gcd(b^m+1,b^m-1)=1$, we have $\gcd(P,p_1p_2\dotsb p_t)=1$. Next, consider $q_1,q_2,\dotsc,q_\tau$ be all prime factors of $b^{m-1}+1$. Hence, $b^{m-1}\equiv-1\pmod{q_1q_2\dotsb q_\tau}$. Let $r_1,r_2,\dotsc,r_{(P-b+1)/2}$ be positive integers, where $r_{i+1}-r_i\geq m+1$ for all $1\leq i\leq(P-b-1)/2$, be defined as follows.
\begin{itemize}
\item If $(P-b+1)/2$ is odd, then
$$b^{r_i+2}\equiv\begin{cases}
-1\pmod{q_1q_2\dotsb q_\tau}&\text{if }1\leq i\leq(P-b-1)/4;\\
1\pmod{q_1q_2\dotsb q_\tau}&\text{otherwise}.
\end{cases}$$
\item If $(P-b+1)/2$ is even, then
$$b^{r_i+2}\equiv\begin{cases}
-1\pmod{q_1q_2\dotsb q_\tau}&\text{if }1\leq i\leq(P-b-3)/4;\\
1\pmod{q_1q_2\dotsb q_\tau}&\text{if }(P-b+1)/4\leq i\leq(P-b-3)/2;\\
b\pmod{q_1q_2\dotsb q_\tau}&\text{otherwise}.
\end{cases}$$
\end{itemize}
Now, let $c=\sum_{i=1}^{(P-b+1)/2}b^{r_i}(b^m+1)$. Since $b\mid c$, we have $b\nmid(c+1)$. Next, $s_b(c)=P-b+1$ from our construction, thus $s_b(c)+b-1=P=b^m+1$, which is a factor of $c$. Recalling that $P$ is relatively prime to all positive integers up to $2b$, we have $\gcd(cb^2+j(b-1),P)=1$ for all $1\leq j\leq 2b$. It remains to prove that $\gcd(cb^2+j(b-1),s_b(c)+2(b-1))=1$ for $j\in\{b+1,2b+1\}$. Note that $s_b(c)+2(b-1)=P-b+1+2(b-1)=P+b-1=b(b^{m-1}+1)$. For any prime factor $q$ of $b^{m-1}+1$, we clearly have $q\nmid b$. Moreover, $q\nmid(b-1)$, or otherwise, $q\mid b(b^{m-1}+1)$ and $q\mid(b-1)$ imply $q\mid P$, contradicting that $P$ is relatively prime to all positive integers up to $2b$. If $(P-b+1)/2$ is odd, then
\begin{align*}
    cb^2+(b+1)(b-1)&=\left(\sum_{i=1}^{(P-b+1)/2}b^{r_i+2}\right)P+b^2-1\\
    &\equiv P+b^2-1\\
    &\equiv P+b^2-1-(P+b-1)\\
    &\equiv b(b-1)\\
    &\not\equiv0\pmod{q}
\end{align*}
and $cb^2+(2b+1)(b-1)=cb^2+(b+1)(b-1)+b(b-1)\equiv2b(b-1)\not\equiv0\pmod{q}$. If $(P-b+1)/2$ is even, then 
\begin{align*}
    cb^2+(b+1)(b-1)
    &=\left(\sum_{i=1}^{(P-b+1)/2}b^{r_i+2}\right)P+b^2-1\\
    &\equiv 2bP+b^2-1\\
    &\equiv 2bP+b^2-1-2b(P+b-1)\\
    &\equiv-(b-1)^2\\
    &\not\equiv0\pmod{q}
\end{align*}
and $cb^2+(2b+1)(b-1)=cb^2+(b+1)(b-1)+b(b-1)\equiv-(b-1)^2+b(b-1)\equiv b-1\not\equiv0\pmod{q}$. Finally, our proof is completed by noticing that $\gcd(cb^2+j(b-1),b)=1$ for $j\in\{b+1,2b+1\}$.
\end{proof}

To complete the investigation on $1$-APs, we provide the following corollary by choosing $b=2$ in Theorem~\ref{thm:bevenb-1AP}.

\begin{corollary}\label{cor:b=2,1AP}
For $b=2$, the maximum length of a sequence of consecutive $2$-anti-Niven numbers is $5$. Furthermore, there exist infinitely many such sequences of length $5$.
\end{corollary}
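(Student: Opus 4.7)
The corollary is precisely the $b=2$ case of Theorem~\ref{thm:bevenb-1AP}: when $b=2$, we have $b-1=1$, so a $b$-anti-Niven $(b-1)$-AP is a sequence of consecutive $2$-anti-Niven numbers, and the asserted maximum length $2b+1=5$ and the infinitude of such sequences are exactly what Theorem~\ref{thm:bevenb-1AP} yields. Thus the plan is to specialize the previous theorem to $b=2$ and verify that no step of its proof silently assumed $b\geq 4$.

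For the upper bound, I would trace through the parity argument with $b=2$ in mind. The identity $ab+2(b-1)=(a+1)b+(b-2)$ degenerates to $2a+2=2(a+1)$ when $b=2$, but the conclusion that $s_2(a+1)$ is odd still follows from the evenness of $2(a+1)$ together with its being $2$-anti-Niven. Similarly, the derivation $a=cb+b-1$ with $s_b(c)$ even and $s_b(c+1)$ odd reduces in binary to the familiar observation that two consecutive integers whose binary digit sums are both odd must have the shape $a=2c+1$ with $s_2(c)$ even and $s_2(c+1)$ odd. The conclusion that $\mathcal{S}$ sits inside the window $\{cb^2+j(b-1):1\le j\le 2b+1\}$ of $5$ consecutive integers then follows verbatim.

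For the existence part, I would specialize the construction in the proof of Theorem~\ref{thm:bevenb-1AP} to $b=2$. Take $P=2^m+1$, where $m$ is chosen via Lemma~\ref{lem:b^m=b} so that $2^{m+1}\equiv 2\pmod{p_1p_2\cdots p_t}$ for $p_1,\dots,p_t$ the primes at most $2b=4$ (namely $\{2,3\}$); then follow the recipe that defines $c$ as a suitable sum of powers of $2$ multiplied by $P$, with exponents $r_i$ chosen relative to the prime factors $q_1,\dots,q_\tau$ of $2^{m-1}+1$. The congruence conditions in that construction depend only on $\gcd(q,b)=1$ and $\gcd(q,b-1)=1$, both of which are automatic here since $b-1=1$; so every ``$\gcd$ with the digit sum'' computation at the end of the proof carries over.

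The main ``obstacle'' is really just bookkeeping: making sure the case split on the parity of $(P-b+1)/2$ still produces a valid strictly increasing sequence of exponents $r_i$ for $b=2$, and that $s_b(c)+b-1=P$ and $s_b(c)+2(b-1)=b(b^{m-1}+1)$ specialize correctly to $s_2(c)+1=2^m+1$ and $s_2(c)+2=2^{m-1}+1)\cdot 2$. Since nothing conceptually new is required, I expect the proof to consist of a single sentence invoking Theorem~\ref{thm:bevenb-1AP} with $b=2$, together with (at most) a brief remark that the edge case $b-2=0$ causes no trouble.
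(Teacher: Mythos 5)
Your proposal matches the paper exactly: the corollary is obtained simply by setting $b=2$ in Theorem~\ref{thm:bevenb-1AP}, under which a $(b-1)$-AP becomes a $1$-AP and $2b+1$ becomes $5$, and the paper offers no further argument beyond this specialization. Your additional bookkeeping checks that the theorem's proof survives the edge case $b=2$ are sensible but not something the paper spells out.
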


\section{Concluding Remarks}\label{sec:conclusion}

Theorem~\ref{thm:2AP} establishes the that the maximum length of a $b$-anti-Niven $2$-AP is at most $b-2$ when $b-1$ has an odd prime divisor.  Although we have not established an upper bound for the maximum length of a $b$-anti-Niven $2$-AP when $b=2^r+1$ for some nonnegative integer $r$, the following theorem establishes a lower bound.

\begin{theorem}\label{thm:c13}
Let $b=2^r+1$ for some nonnegative integer $r$. Then the maximum length of a $b$-anti-Niven $2$-AP is at least $b$.
\end{theorem}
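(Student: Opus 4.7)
The plan is to exhibit a concrete $b$-anti-Niven $2$-AP of length exactly $b$. The edge case $r=0$ (i.e., $b=2$) is trivial, since $\{3,5\}$ is a $2$-anti-Niven $2$-AP: $s_2(3)=s_2(5)=2$, which is coprime to both $3$ and $5$. So assume $r\geq 1$, which makes $b\geq 3$ odd.

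Let $q_1,\dotsc,q_t$ be the odd primes at most $b$, and by Lemma~\ref{lem:b^m=b} choose $m\geq 2$ with $b^m\equiv b\pmod{q_1q_2\dotsb q_t}$. I claim that the sequence $\{b^m+2j:0\leq j\leq b-1\}$ works. Since $b$ is odd, every term is odd, so the obstruction from Lemma~\ref{lem:b-1} with $\delta=2$ is avoided. Reading off base-$b$ expansions gives $s_b(b^m+2j)=1+2j$ when $0\leq 2j\leq b-1$, and $s_b(b^m+2j)=2+(2j-b)$ when $b\leq 2j\leq 2b-2$. In either case the digit sum is an odd integer in $\{1,3,\dotsc,b\}$.

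To verify $\gcd(b^m+2j,\,s_b(b^m+2j))=1$, let $q$ be any prime divisor of this digit sum. Then $q$ is odd and $q\leq b$, so by the choice of $m$ we have $b^m+2j\equiv b+2j\pmod{q}$. In the first case $q\mid(2j+1)$ forces $b+2j\equiv b-1\pmod{q}$, while in the second case $q\mid(2j-b+2)$ forces $b+2j\equiv 2(b-1)\pmod{q}$. Since $b-1=2^r$ is a power of $2$ and $q$ is odd, neither $b-1$ nor $2(b-1)$ is divisible by $q$, so the $\gcd$ condition holds.

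The crux of the argument, and the only place where the hypothesis $b=2^r+1$ genuinely enters, is the observation that $b-1=2^r$ has no odd prime factor; this is what keeps $b-1$ and $2(b-1)$ nonzero modulo any odd prime that could divide a digit sum in $\{1,3,\dotsc,b\}$. Beyond identifying the right sequence to try, the remaining work is routine base-$b$ bookkeeping paired with Lemma~\ref{lem:b^m=b}, so I do not anticipate any serious obstacle.
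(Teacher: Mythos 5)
Your proof is correct and is essentially the paper's argument: the same case split on whether the last base-$b$ digit wraps past $b$, and the same key reduction of the terms to $b-1$ or $2(b-1)$ modulo an odd prime divisor of the digit sum, with $b-1=2^r$ supplying the coprimality. The only difference is that the paper runs your construction with $m=1$, i.e.\ the sequence $\{b+2j:0\leq j\leq b-1\}$ itself, which makes Lemma~\ref{lem:b^m=b} unnecessary and turns the congruence checks into exact gcd computations such as $\gcd(b+2j,1+2j)=\gcd(2^r+1+2j,2^r)=1$.
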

\begin{proof}
If $r=0$, then $b=2$, and $\{2,4\}$ forms a $2$-anti-Niven $2$-AP of length $2$. If $r>0$, then $b$ is odd. For all $0\leq j\leq(b-1)/2$, we have $\gcd(b+2j,s_b(b+2j))=\gcd(b+2j,1+2j)=\gcd(b+2j,b-1)=\gcd(2^r+1+2j,2^r)=1$. Furthermore, for all $0\leq j\leq(b-3)/2$, we have $\gcd(2b+1+2j,s_b(2b+1+2j))=\gcd(2b+1+2j,3+2j)=\gcd(2b+1+2j,2b-2)=\gcd(2^{r+1}+3+2j,2^{r+1})=1$. Therefore, $\{b+2j:0\leq j\leq(b-1)/2\}\cup\{2b+1+2j:0\leq j\leq(b-3)/2\}$ forms a $b$-anti-Niven $2$-AP of length $b$.
\end{proof}

Theorem~\ref{thm:bevenb-1AP} establishes the that the maximum length of a $b$-anti-Niven $(b-1)$-AP is at most $2b+1$ when $b$ is even.  The following theorem establishes a lower bound when $b$ is an odd prime.

\begin{theorem}\label{thm:c14}
Let $b$ be an odd prime. Then the maximum length of a $b$-anti-Niven $(b-1)$-AP is at least $2b+1$.
\end{theorem}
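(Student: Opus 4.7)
My plan is to parallel the proof of Theorem~\ref{thm:bevenb-1AP}: I will exhibit a positive integer $c$ with last base-$b$ digit at most $b-2$ so that the no-carry digit-sum identities of that proof carry over verbatim, giving $s_b(cb^2+j(b-1))=s_b(c)+(b-1)$ for $j\in J_1:=\{1,\ldots,b\}\cup\{b+2,\ldots,2b\}$ and $s_b(cb^2+j(b-1))=s_b(c)+2(b-1)$ for $j\in J_2:=\{b+1,\,2b+1\}$. The sequence $\{cb^2+j(b-1):1\le j\le 2b+1\}$ is then a $(b-1)$-AP of length $2b+1$, and the anti-Niven property reduces to coprimality of each term with $S_1:=s_b(c)+(b-1)$ (for $j\in J_1$) or $S_2:=s_b(c)+2(b-1)$ (for $j\in J_2$).

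I would take $c=P\sum_{i=1}^{k}b^{r_i}$, where $P$ is an odd prime exceeding $2b$ with $s_b(P)\mid P-(b-1)$, $k=(P-(b-1))/s_b(P)$, and the $r_i$ are positive integers spaced more than $\log_b P$ apart so that no digit collisions occur when the shifted copies of $P$ are summed. This forces $s_b(c)=k\cdot s_b(P)=P-(b-1)$, so $S_1=P$ and $S_2=P+(b-1)$. The $S_1$-conditions follow from $P\mid c$, $P$ prime, $P>2b$, and $\gcd(P,b-1)=1$, since then $\gcd(cb^2+j(b-1),P)=\gcd(j(b-1),P)=1$ for every $j\in J_1$. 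The $S_2$-conditions I would check one odd prime $q\mid S_2$ at a time---and $S_2=P+(b-1)$ is odd because $P$ is an odd prime and $b-1$ is even, so every such $q\ge 3$. Using $P\equiv-(b-1)\pmod q$, the condition reduces to requiring that $\sum b^{r_i}\bmod q$ avoid two specific residues, which can always be done among $\ge 3$ choices; by CRT, the $r_i$ can be chosen to dodge all forbidden residues simultaneously while maintaining the spacing. Finally, taking $r_1\ge 1$ forces $c\equiv 0\pmod b$, supplying the no-carry condition, and additionally imposing $\gcd(k,b-1)=1$ yields $\gcd(c,b-1)=1$.

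The main obstacle will be producing the prime $P$ itself. In the even case, the authors exploit $P=b^m+1$, which for $b$ even is odd, coprime to all primes $\le 2b$ via Lemma~\ref{lem:b^m=b}, and automatically satisfies $s_b(P)=2\mid P-(b-1)$. For $b$ odd, $b^m+1$ is even, making $S_1$ even and every term $cb^2+j(b-1)$ even, so this particular choice collapses. Accordingly, I must separately establish the existence of an odd prime $P>2b$ satisfying $s_b(P)\mid P-(b-1)$ and $\gcd((P-b+1)/s_b(P),\,b-1)=1$. I expect to obtain this either by Dirichlet's theorem applied in a carefully chosen residue class modulo a product of small primes, together with an auxiliary control on $s_b(P)$, or by exhibiting an explicit family (for example, primes of the form $b^m+b^\ell+1$ or $2b^m+b^\ell+1$) lying in the appropriate arithmetic progression. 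This digit-sum-constrained existence argument is the technically involved heart of the proof.
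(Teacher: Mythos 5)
Your proposal does not close: it reduces the theorem to the existence of an odd prime $P>2b$ with $s_b(P)\mid P-(b-1)$ and $\gcd((P-b+1)/s_b(P),\,b-1)=1$, and you explicitly defer this step ("the technically involved heart of the proof") to Dirichlet-type or explicit-family arguments that you do not carry out. Since the whole construction of $c$, and hence of the AP, hinges on producing such a $P$, the argument as written is incomplete. There is also a secondary soft spot: the claim that $\sum_i b^{r_i}\bmod q$ can be steered to avoid two forbidden residues "among $\ge 3$ choices" is not justified --- the attainable residues depend on the multiplicative order of $b$ modulo $q$ and on the number of summands $k$, which is already pinned down as $(P-b+1)/s_b(P)$, so you do not have free rein there.

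More importantly, the heavy machinery is unnecessary, and the hypothesis you are underusing is that $b$ itself is \emph{prime}. The paper's proof simply takes the explicit $(b-1)$-AP $\{1+j(b-1):0\le j\le 2b\}$, whose terms are $1$, $b$, the numbers $b+j(b-1)=jb+(b-j)$ for $1\le j\le b-1$, then $b^2$, then $b^2+j(b-1)=b^2+(j-1)b+(b-j)$ for $1\le j\le b-1$. The middle terms have base-$b$ digit sum exactly $b$ (no carries occur since each digit lies in $\{0,1,\dots,b-1\}$), and each such term is $\equiv -j\not\equiv 0\pmod b$, so coprimality to its digit sum is immediate from the primality of $b$; the terms $1$, $b$, $b^2$ have digit sum $1$. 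This gives length $2b+1$ in a few lines. I would recommend abandoning the attempt to transplant the proof of Theorem~\ref{thm:bevenb-1AP} (which is needed there because for even $b$ one must fight parity obstructions that simply do not arise here) and instead exploiting primality of $b$ directly.
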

\begin{proof}
Clearly, $1$, $b$, and $b^2$ are $b$-anti-Niven. For all $1\leq j\leq b-1$, we have
\begin{align*}
\gcd\big(b+j(b-1),s_b(b+j(b-1))\big)&=\gcd\big((j+1)b-j,s_b(jb+b-j)\big)\\
&=\gcd((j+1)b-j,j+b-j)\\
&=\gcd((j+1)b-j,b)=1
\end{align*}
since $b$ is a prime. Furthermore, for all $1\leq j\leq b-1$, we have
\begin{align*}
\gcd\big(b^2+j(b-1),s_b(b^2+j(b-1))\big)&=\gcd\big(b^2+jb-j,s_b(b^2+(j-1)b+b-j)\big)\\
&=\gcd(b^2+jb-j,1+j-1+b-j)\\
&=\gcd(b^2+jb-j,b)=1.
\end{align*}
Therefore, $\{1,b\}\cup\{b+j(b-1):1\leq j\leq b-1\}\cup\{b^2\}\cup\{b^2+j(b-1):1\leq j\leq b-1\}$ forms a $b$-anti-Niven $(b-1)$-AP of length $2b+1$.
\end{proof}

Recall that Theorem~\ref{thm:smallerthanbound} shows that the upper bound on the maximum length of a $b$-anti-Niven $d$-AP given by Theorem~\ref{thm:bound} may not be achievable for even $b$. However, when $b$ is odd, computational data suggests otherwise. Of course, if $d$ is odd, then Theorem~\ref{thm:bound} implies that the maximum length of a $b$-anti-Niven $d$-AP is at most $1$, which is clearly attainable. It is more interesting to investigate if $d$ is even.  The next conjecture addresses this more interesting case and generalizes Theorem~\ref{thm:2AP} to other even values of $d$.

\begin{conjecture}\label{con:c15}
Let $b$ be odd such that $b\neq2^r+1$ for any positive integer $r$, let $d$ be even, and let $p$ be the smallest prime such that $p\mid(b-1)$ and $p\nmid d$. Then there exist infinitely many $b$-anti-Niven $d$-APs of length $p-1$.
\end{conjecture}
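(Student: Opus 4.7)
The plan is to generalize the odd-$b$ construction in Theorem~\ref{thm:2AP} from $d=2$ to arbitrary even $d$. I will seek $b$-anti-Niven $d$-APs of the form $\{b^m+a_0+jd:0\leq j\leq p-2\}$, where $m$ is a large integer supplied by Lemma~\ref{lem:b^m=b} and $a_0$ is an even nonnegative integer to be chosen via the Chinese Remainder Theorem. Because $b$ is odd and $d$ is even, taking $a_0$ even forces every term $N_j:=b^m+a_0+jd$ to be odd, which handles the prime $2$ automatically.

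Keeping $a_0+(p-2)d<b^2$ and choosing $m$ large, we have $S_j:=s_b(N_j)=1+s_b(a_0+jd)$; writing $a_0+jd=k_jb+r_j$ with $0\leq r_j<b$, this equals $1+k_j+r_j$. A short computation using $b^m\equiv b\pmod q$ shows that whenever an odd prime $q$ divides $S_j$, one has $N_j\equiv(k_j+1)(b-1)\pmod q$. Hence $N_j$ is $b$-anti-Niven precisely when the odd part of $\gcd\bigl(S_j,(k_j+1)(b-1)\bigr)$ equals $1$ for every $j$. Let $B$ denote the odd part of $b-1$, which exceeds $1$ because $b\neq 2^r+1$.

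In the simplest case $(p-2)d<2b$, the AP $\{a_0+jd\}$ fits inside $[0,2b)$, so $k_j\in\{0,1\}$, the odd part of $k_j+1$ equals $1$, and the condition collapses to $\gcd(S_j,B)=1$ for each $j$. Since $S_j\equiv 1+a_0+jd\pmod{b-1}$, this is a system of constraints on $a_0$ modulo the prime divisors of $B$: for each prime $q\mid B$ with $q\mid d$ (necessarily $q<p$), only the residue $a_0\equiv -1\pmod q$ is forbidden; for each prime $q\mid B$ with $q\nmid d$ (necessarily $q\geq p$), the $p-1$ residues $a_0\equiv -1-jd\pmod q$, $0\leq j\leq p-2$, are forbidden, leaving at least one residue available (exactly one when $q=p$, namely $a_0\equiv d-1\pmod p$). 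The CRT then produces an admissible $a_0$ modulo $2B$, and Lemma~\ref{lem:b^m=b}, applied to the product of odd primes up to $\max_j S_j$, supplies infinitely many valid $m$.

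The main obstacle is the case $(p-2)d\geq 2b$, in which the AP necessarily crosses several multiples of $b$: the block index $k_j$ can exceed $1$, and the factor $k_j+1$ introduces odd primes outside $b-1$ into the system; worse, $k_j$ itself depends on $a_0$, so the CRT system becomes self-referential. I would attempt to sidestep this by first solving the simpler system above to fix a residue $a_0^*\pmod B$, and then letting $a_0=a_0^*+Bt$ vary with $t$; the block pattern $(k_0,\dots,k_{p-2})$ is locally constant in $t$, so one can fix a single configuration and enlarge the CRT modulus by the odd prime factors of $\prod_j(k_j+1)$. Verifying that the forbidden residues modulo each newly-added prime do not collectively exhaust the available classes, uniformly across all admissible $b$ and $d$, is the delicate step and will likely require a case analysis based on $\gcd(d,b)$ and on the ratio $d/b$.
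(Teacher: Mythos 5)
This statement is Conjecture~\ref{con:c15} in the paper: the authors offer no proof of it, presenting it only as a generalization of Theorem~\ref{thm:2AP} supported by computational data. So there is no proof of record to compare against, and your proposal must stand entirely on its own. It does not: by your own admission the argument is only complete in the ``simplest case,'' and the remaining case is not a technicality but the generic situation, since $d$ ranges over all even integers and is unbounded relative to $b$.

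Concretely, two gaps remain. First, your reduction $N_j\equiv(k_j+1)(b-1)\pmod q$ rests on writing $a_0+jd=k_jb+r_j$ with a single carry coefficient, so that $a_0+jd-s_b(a_0+jd)=k_j(b-1)$; once $a_0+(p-2)d\geq b^2$ the quantity $a_0+jd-s_b(a_0+jd)$ is $(b-1)\sum_i c_i(b^{i}-1)/(b-1)$ for a multi-digit expansion, and its residue modulo an arbitrary odd prime $q\mid S_j$ is no longer controlled by Lemma~\ref{lem:b^m=b} or Lemma~\ref{lem:b-1} (which only govern the modulus $b-1$). Your congruence, and hence the entire criterion ``odd part of $\gcd(S_j,(k_j+1)(b-1))$ equals $1$,'' must be rederived there. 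Second, even in the intermediate regime where $k_j$ can exceed $1$ but the two-digit decomposition survives, the primes dividing $k_j+1$ enter the system, the values $k_j$ depend on $a_0$, and you concede that verifying the resulting CRT system is solvable ``uniformly across all admissible $b$ and $d$'' is not done. That is precisely the obstruction that presumably kept the authors from upgrading this from a conjecture to a theorem, so the proposal should be regarded as a plausible plan for a special case (small $(p-2)d$ relative to $b$, where it does faithfully extend the Theorem~\ref{thm:2AP} construction), not a proof. Minor additional points: your parenthetical that $q\mid B$ and $q\mid d$ force $q<p$ is false (take $b-1=2\cdot3\cdot7$, $d=14$, $p=3$, $q=7$), though harmless where you use it; and the boundary of your ``simplest case'' should be $a_0+(p-2)d<2b$ rather than $(p-2)d<2b$, since the CRT only pins $a_0$ down modulo $2B$ and the least admissible representative can be as large as $2B-1$.
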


Theorem~\ref{thm:smallerthanbound} established an upper bound for the maximum length of a $b$-anti-Niven $d$-AP when $b\geq 6$ is even and $3\leq d\leq b/2$ is an odd integer.  We conjecture that this bound is attainable for infinitely many pairs $(b,d)$.

\begin{conjecture}\label{con:c07}
    Let $b\geq6$ be even, and let $3\leq d\leq b/2$ be an odd integer. There exist infinitely many pairs $(b,d)$ for which there is a $b$-Niven $d$-AP of length $\lceil2b/d\rceil+2$.
\end{conjecture}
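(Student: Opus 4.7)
The plan is to prove the conjecture by exhibiting an explicit infinite subfamily of pairs $(b, d)$ satisfying the hypotheses, together with constructions of $b$-Niven $d$-APs of the stated length $L = \lceil 2b/d \rceil + 2$. The cleanest specialization is $b = 2d$ with $d$ ranging over odd primes; the constraint $3 \leq d \leq b/2$ is then satisfied with equality on the right (so $b \equiv 2 \pmod{4}$ is automatic), and the target length collapses to the constant $L = 6$, which eliminates $b$-dependence from $L$ and lets one treat the construction uniformly in $d$.

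For each such pair $(b, d) = (2d, d)$, I would seek the AP in the ``high-low'' form $n_j = M b^K + R + jd$ for $j = 0, \ldots, 5$, where $K$ is chosen large enough (e.g.\ $K = 3$) that $R + 5d < b^K$, so that additions of $jd$ never propagate a carry into $M$. Under this separation the digit sums split cleanly as $s_b(n_j) = s_b(M) + s_b(R + jd)$, and the six Niven divisibility conditions reduce to $\sigma_j \mid M b^K + R + jd$ with $\sigma_j := s_b(M) + s_b(R + jd)$. The strategy is first to fix an explicit $R$ (a short polynomial in $b$) so that the six values $s_b(R + jd)$ form a predictable, pairwise-coprime-friendly sequence, and then to find $M$ satisfying the six resulting congruences via the Chinese Remainder Theorem.

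The CRT step carries a self-referential wrinkle because the moduli $\sigma_j$ themselves depend on $s_b(M)$. I would handle this by fixing a target $s_b(M) = s$ in advance and then solving the linear system $M \equiv -(R + jd) b^{-K} \pmod{\sigma_j}$, using $\gcd(b, \sigma_j) = 1$ (arrangeable through the choice of $R$). Given any base CRT solution $M_0$, infinitely many valid $M$ with $s_b(M) = s$ are obtained by padding $M_0$ with high-order zero and unit digits placed at positions that shift $M_0$ by a multiple of $\lcm(\sigma_0, \ldots, \sigma_5)$---a maneuver in the spirit of Lemma~\ref{lem:b^m=b}.

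The main obstacle I anticipate is verifying CRT-compatibility of the six $\sigma_j$'s. Whenever $g := \gcd(\sigma_i, \sigma_j) > 1$, the congruences force $g \mid (j - i) d$; since $d$ is prime this narrows $g$ to $\{1, d\}$, and each case must be cross-checked against the combinatorial structure of $s_b(R + jd)$ dictated by the chosen $R$. Engineering universal compatibility for one $R$ across all $d$ seems overly optimistic, so I expect the argument to restrict to an infinite but sparse subfamily of primes $d$ (chosen, for instance, to avoid small-prime obstructions in $b - 1 = 2d - 1$), with the heart of the proof being a careful combinatorial analysis of how $s_b(R + jd)$ behaves as $j$ traverses the two base-$b$ ``blocks'' of size $b = 2d$ through which the AP $\{R + jd\}_{j=0}^{5}$ passes.
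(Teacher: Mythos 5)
This statement is Conjecture~\ref{con:c07}, one of the paper's open conjectures: the paper offers no proof of it, so there is no ``paper approach'' to compare yours against, and your write-up must stand on its own as a complete argument. It does not; it is a plan whose decisive steps are explicitly deferred. You never specify $R$, never verify CRT-compatibility of the moduli $\sigma_j$, and never resolve the circularity in the self-referential step: the padding digits you add to move $M$ into the right residue class modulo $\lcm(\sigma_0,\dotsc,\sigma_5)$ themselves change $s_b(M)$, while the $\sigma_j$ --- hence the lcm, hence the number and placement of padding digits --- depend on $s_b(M)$. Moreover, your claim that $d$ prime narrows $\gcd(\sigma_i,\sigma_j)$ to $\{1,d\}$ is false: solvability only forces $\gcd(\sigma_i,\sigma_j)\mid(j-i)d$ with $1\leq j-i\leq 5$, so the primes $2$, $3$, $5$ are all live obstructions, and these are exactly the cases your sketch does not handle. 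You concede the point yourself (``I expect the argument to restrict to an infinite but sparse subfamily''), which is an honest admission that the heart of the proof is missing.

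There is also a mismatch of target. Read in context (the sentence immediately preceding it), the conjecture asserts that the upper bound of Theorem~\ref{thm:smallerthanbound} is attained, and that theorem bounds $b$-\emph{anti}-Niven $d$-APs; the word ``$b$-Niven'' in the conjecture's statement is evidently a misprint. Your framework imposes the divisibility conditions $\sigma_j\mid Mb^K+R+jd$, i.e., it constructs Niven numbers, which is the opposite property from $\gcd(s_b(n),n)=1$. So even if every CRT and padding issue were repaired, you would have addressed the literal misprint rather than the intended conjecture; for the anti-Niven version the natural tool is not CRT-forced divisibility but arranging coprimality, as in the constructions of Theorems~\ref{thm:2AP} and~\ref{thm:bevenb-1AP}. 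Your specialization $b=2d$ with $d$ an odd prime, which makes the target length the constant $\lceil 2b/d\rceil+2=6$, is a sensible device for producing infinitely many pairs and is worth keeping; but for both the literal and the intended reading, what you have is a proposal for where a proof might live, not a proof.
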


\section{Acknowledgements}

These results are based on work supported by the National Science Foundation under grant numbered MPS-2150299.

\end{document}